\documentclass[11pt,oneside,reqno]{amsart}
\usepackage[margin=1.2in]{geometry}
\usepackage{amsmath,amssymb,amsthm,mathtools,xurl}
\usepackage{microtype}
\usepackage{enumitem}
\usepackage[colorlinks=true,linkcolor=blue,citecolor=blue,urlcolor=blue]{hyperref}

\newtheorem{theorem}{Theorem}[section]
\newtheorem{proposition}[theorem]{Proposition}
\newtheorem{lemma}[theorem]{Lemma}
\newtheorem{corollary}[theorem]{Corollary}
\theoremstyle{remark}
\newtheorem{remark}[theorem]{Remark}

\numberwithin{equation}{section}

\newcommand{\R}{\mathbb R}

\title[Blow-up Inhomogeneous BNLS]{Nonradial Finite-Time Blow-up for the Mass-Critical Inhomogeneous Biharmonic NLS}

\author[N.~U.~C.~Nguyen, V.~D.~Dinh]{Ngoc Uyen Cong Nguyen and Van Duong Dinh}

\address[N.~U.~C. Nguyen]{Claire and Leo LLC, Houston, Texas, USA}
\email{contact@uyencong.com}

\address[V.~D.~Dinh]{Claire and Leo LLC, Houston, Texas, USA}
\email{contact@duongdinh.com}
    
\date{September 19, 2026}

\subjclass[2020]{35Q55, 35B44, 35B40}

\keywords{inhomogeneous biharmonic nonlinear Schr\"odinger equation, finite-time blow-up, mass-critical equation, localized virial identity, weighted Gagliardo-Nirenberg inequality}

\begin{document}
\maketitle

\begin{abstract}
We consider the focusing mass-critical inhomogeneous biharmonic nonlinear Schr\"odinger equation
\[
i\partial_tu=\Delta^2u-|x|^{-b}|u|^{\frac{8-2b}{N}}u, \quad 0<b<\min\{4,N/2\}.
\]
We prove that every \(H^2(\mathbb R^N)\) solution with negative energy blows up in finite time, both forward and backward. No radial symmetry or finite-variance assumption is required. In particular, this rules out the global grow-up scenario in the previously known negative-energy blow-up alternative and establishes finite-time blow-up in the genuinely nonradial setting. The proof relies on a new localized virial argument exploiting the spatial inhomogeneity to control the localization errors.

\end{abstract}

\section{Introduction and main result}
\label{sec:intro}

We consider the focusing mass-critical inhomogeneous biharmonic nonlinear Schr\"odinger equation
\begin{equation}\label{eq:ibnls}
\begin{cases}
i\partial_tu=\Delta^2u-|x|^{-b}|u|^{\frac{8-2b}{N}} u,\\
u(0)=u_0\in H^2(\R^N),
\end{cases}
\quad 0<b<\min\{4,N/2\}.
\end{equation}
Equation \eqref{eq:ibnls} is $L^2$-critical (mass-critical) since it is invariant under the scaling
\[
u_\lambda(t,x)=\lambda^{N/2}u(\lambda^4t,\lambda x),
\]
which also preserves the $L^2$-norm. Along the maximal $H^2$ solution, the mass
$$
M(u)=\|u(t)\|_{L^2}^2
$$
and the energy
$$
E(u)=\frac12\|\Delta u(t)\|_{L^2}^2 -\frac{N}{2N+8-2b}\int_{\R^N}|x|^{-b}|u(t,x)|^{\frac{2N+8-2b}{N}} dx
$$
are conserved.

Fourth-order nonlinear Schr\"odinger equations were introduced by Karpman \cite{Karpman} and Karpman--Shagalov \cite{KS} as higher-order dispersive models arising in nonlinear optics. Since then, biharmonic Schr\"odinger equations have attracted considerable attention as natural higher-order counterparts of the classical nonlinear Schr\"odinger equation; see, among many others, \cite{BKS,BL,FIP,Pausader}. The inhomogeneous factor $|x|^{-b}$ introduces a spatially decaying nonlinear interaction and leads to a substantially different dynamics from the translation-invariant problem. Local and global well-posedness, stability, and scattering for inhomogeneous biharmonic Schr\"odinger equations have been studied in \cite{Campos-Guzman,Cardoso-Guzman-Pastor,DK, Guzman-Pastor,Liu-Zhang,Saanouni} and the references therein.

Our concern here is finite-time blow-up at the mass-critical power. For the homogeneous biharmonic NLS, a fundamental contribution is due to Boulenger and Lenzmann \cite{BL}, who developed localized virial and Riesz-bivariance methods adapted to the fourth-order flow and established a general blow-up theory in the mass-critical and mass-supercritical regimes. In contrast with the classical NLS, however, the biharmonic equation does not possess a variance identity whose concavity directly forces finite-time blow-up. In particular, for the pure mass-critical biharmonic flow, their argument yields for radial negative-energy solutions only the alternative between finite-time blow-up and global grow-up.

Finite-time blow-up for inhomogeneous fourth-order Schr\"odinger equations was first investigated by Cho, Ozawa and Wang \cite{COW}, who considered the special mass-critical nonlinearity $|x|^{-2}|u|^{\frac{4}{N}} u$. Under suitable dimensional restrictions, they proved finite-time blow-up for negative-energy solutions assuming either radial symmetry or finite variance. For the same $b=2$ mass-critical model, the second author \cite{Dinh-AML} subsequently exploited the spatial decay of the inhomogeneous nonlinearity to remove these assumptions and proved finite-time blow-up for nonradial negative-energy solutions in the presence of a nontrivial lower-order dispersion term. In the intercritical regime, Saanouni and Ghanmi \cite{Saanouni-Ghanmi} established blow-up criteria and strong instability of standing waves, while, more recently, Scarpelli and Hespanha \cite{Scarpelli-Hespanha} obtained radial and nonradial blow-up results for the energy-critical inhomogeneous biharmonic equation.

The result most closely related to the present work is due to Bai, Majdoub and Saanouni \cite{Bai-Majdoub-Saanouni}, who considered the general mass-critical equation
\[ 
i\partial_tu-\Delta^2u+\nu\Delta u =-|x|^{-b}|u|^{\frac{8-2b}{N}}u,
 \quad
 0<b<\min\{4,N/2\},\quad \nu\ge0.
\]
When $\nu>0$, they proved finite-time blow-up for arbitrary negative-energy $H^2$ solutions, without radial symmetry. For the pure biharmonic case $\nu=0$, however, their argument yields only the alternative
\[
T_+<\infty
\quad\text{or}\quad
\|\Delta u(t)\|_{L^2}\gtrsim t^2
\quad\text{for }t\gg1.
\]
Thus, even in the presence of the spatial inhomogeneity, the possibility of a global negative-energy solution undergoing unbounded grow-up remained open.

This phenomenon has no analogue in the corresponding mass-critical inhomogeneous second-order NLS. Cardoso and Farah \cite{Cardoso-Farah} proved that negative energy forces finite-time blow-up for arbitrary $H^1$ solutions, without radial symmetry or finite variance. It is therefore natural to ask whether the inhomogeneity is likewise strong enough to eliminate the grow-up branch for the pure biharmonic flow. The main result of this paper answers this question affirmatively: we rule out the global grow-up scenario in \cite{Bai-Majdoub-Saanouni} and prove that every negative-energy $H^2$ solution of the mass-critical inhomogeneous pure biharmonic NLS blows up in finite time, without any radial or finite-variance assumption.

We recall the local theory needed to formulate the result. By Liu and Zhang \cite{Liu-Zhang}, for every $u_0\in H^2(\mathbb R^N)$ and
\[
0<b<\min\{4,N/2\},
\]
equation \eqref{eq:ibnls} admits a unique maximal solution
\[
u\in C((-T_-,T_+);H^2(\mathbb R^N)),
\quad
T_-,T_+\in(0,\infty].
\]
The mass and energy are conserved on the maximal lifespan, and the usual blow-up alternative holds:
\[
T_+<\infty
\quad\Longrightarrow\quad
\lim_{t\uparrow T_+}\|u(t)\|_{H^2}=\infty,
\]
with the analogous statement backward in time. 

We are now ready to state our main result.

\begin{theorem}[Nonradial finite-time blow-up]\label{thm:main}
Let $N\ge1$ and $0<b<\min\{4,N/2\}$. Let $u_0\in H^2(\R^N)$, and let
\[
u\in C((-T_-,T_+);H^2(\R^N))
\]
be the maximal solution of \eqref{eq:ibnls}. If $E(u_0)<0$, then
\[
T_+<\infty
\quad\text{and}\quad
T_-<\infty.
\]
In particular,
\[
\lim_{t\uparrow T_+}\|u(t)\|_{H^2}=\infty,
\quad
\lim_{t\downarrow-T_-}\|u(t)\|_{H^2}=\infty.
\]
\end{theorem}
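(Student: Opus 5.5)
We will argue by contradiction, following the localized virial strategy of Boulenger--Lenzmann, but with a localized \emph{variance} rather than the localized Morawetz action, exploiting the decay of $|x|^{-b}$. By time reversal ($u(t)\mapsto\overline{u(-t)}$) it suffices to prove $T_+<\infty$. Suppose $T_+=\infty$. Fix a radial cutoff $\phi\in C_c^\infty(\R^N)$ with $\phi(x)=|x|^2$ for $|x|\le1$, $\phi$ constant for $|x|\ge2$, and set $\phi_R=R^2\phi(\cdot/R)$. We study $V_R(t)=\int\phi_R|u(t)|^2\,dx$ together with the localized Morawetz quantity
\[
M_R(t)=2\,\mathrm{Im}\int \overline{u}\,\nabla\phi_R\cdot\nabla u\,dx,
\]
so that $V_R'=M_R$ up to a harmless sign convention for the fourth-order flow. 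The standard computation gives
\[
M_R'(t)=16E(u_0)-\Big(\text{terms from }\Delta\phi_R\ne 2N\Big)+\mathcal{O}\big(R^{-2}\|\Delta u\|_{L^2}\|u\|_{L^2}+R^{-4}\|u\|_{L^2}^2\big)+\mathcal{O}\Big(\int_{|x|\ge R}|x|^{-b}|u|^{\frac{2N+8-2b}{N}}\Big),
\]
where the kinetic localization error is controlled by the Boulenger--Lenzmann bivariance estimates, which require $\partial_r^2\phi_R\le 2$, a condition arranged by the choice of $\phi$.

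The key new point is the nonlinear tail. On $|x|\ge R$ we have $|x|^{-b}\le R^{-b}$, so a Gagliardo--Nirenberg inequality on the exterior region, combined with mass conservation, gives
\[
\int_{|x|\ge R}|x|^{-b}|u|^{\frac{2N+8-2b}{N}}\lesssim R^{-b}\|\Delta u\|_{L^2}^{\theta}\|u\|_{L^2}^{\frac{2N+8-2b}{N}-\theta},\qquad \theta=\tfrac{8-2b}{4}<2\quad\text{(since }b>0\text{)}.
\]
The sub-quadratic power in $\|\Delta u\|_{L^2}$ is the whole gain coming from the inhomogeneity. Young's inequality then lets us absorb this term by $\varepsilon\|\Delta u\|_{L^2}^2+C_\varepsilon R^{-\frac{4b}{2b}}$-type constants. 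To close, we use energy negativity to generate a coercive term: writing $16E=8\|\Delta u\|^2-c\int|x|^{-b}|u|^{p}$, we split the virial so as to retain a term $-\delta\|\Delta u\|_{L^2}^2$. Concretely, we replace $16E$ by $16E+\delta(\ldots)$ in the usual way, namely by not fully converting the kinetic term into energy, and choose $R$ large so that
\[
M_R'(t)\le 8E(u_0)-\delta\|\Delta u(t)\|_{L^2}^2\le 8E(u_0)<0\quad\text{for all }t\ge0.
\]

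From here we conclude in two steps. First, $M_R(t)\le M_R(0)+8E t\to-\infty$, and moreover $M_R'\le-\delta\|\Delta u\|^2$. Second, we use $|M_R|\lesssim R\|u\|_{L^2}\|\nabla u\|_{L^2}\lesssim R\|u\|^{3/2}\|\Delta u\|^{1/2}$, so that $\|\Delta u\|^2\gtrsim M_R^4/R^4$. This yields the Riccati-type inequality $M_R'\le-cR^{-4}M_R^4$ once $M_R<0$, which blows up in finite time, contradicting $T_+=\infty$. (This mirrors the Bai--Majdoub--Saanouni grow-up argument but with a superlinear ODE instead of a linear lower bound.) The variance $V_R$ is only needed as an auxiliary if the derivative structure requires it.

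The main obstacle is obtaining the uniform-in-time coercive bound $M_R'\le-\delta\|\Delta u\|^2+8E$. In Boulenger--Lenzmann the kinetic error $R^{-2}\|\Delta u\|\,\|u\|$ is not absorbable by a fixed $R$ without radial symmetry, and there is also the term $\int(\Delta\phi_R-2N)|x|^{-b}|u|^p$, which must be supported in $|x|\ge R$ and hence handled by the $R^{-b}$ gain above. The linear kinetic error is handled by Young's inequality as $\varepsilon\|\Delta u\|^2+C_\varepsilon R^{-4}$. The delicate part is verifying the sign condition on the fourth-order localized kinetic terms (the terms involving $\partial^2\phi_R$ acting on $\nabla\Delta u$-type quantities) for a nonradial cutoff. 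I would first try the explicit Boulenger--Lenzmann formula with $\phi$ chosen so that $\nabla^2\phi_R\le 2\,\mathrm{Id}$ as a matrix, and then bound the remaining commutator terms by $R^{-2}\|\Delta u\|\,\|u\|+R^{-4}\|u\|^2$.
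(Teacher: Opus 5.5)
The gap is the step you flag as the main obstacle: the bound $M_R'(t)\le 8E(u_0)-\delta\|\Delta u(t)\|_{L^2}^2$ is not available at the mass-critical power. Everything after it depends on it: absorbing the $\varepsilon\|\Delta u\|_{L^2}^2$ terms produced by Young, and the Riccati step.

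Because $Np+2b=8$, the kinetic and potential terms in the virial identity occur exactly in the energy ratio. In the paper's normalization, $8\|\Delta u\|_{L^2}^2-\frac{16}{q}\int|x|^{-b}|u|^q\,dx=16E(u)$ identically. ``Not fully converting the kinetic term into energy'' works only away from the critical power. For a general power $|x|^{-b}|u|^\sigma u$ one gets $2(N\sigma+2b)E-(N\sigma+2b-8)\|\Delta u\|_{L^2}^2$, and the coefficient of $\|\Delta u\|^2$ is positive in the supercritical range and zero here.

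With your cutoff, exactly quadratic on $\{|x|\le R\}$, every localization error is an integral over $\{|x|\ge R\}$. So for data supported in that ball, $M_R'(0)$ is a fixed multiple of $E(u_0)$ larger than $8E(u_0)$. Your bound would then force $\|\Delta u_0\|_{L^2}^2\lesssim|E(u_0)|/\delta$. This fails, for any fixed $\delta$, for negative-energy data such as $u_0=cf$ with $f\in C_c^\infty$ and $c$ slightly above the value at which $E(cf)$ changes sign. Consequently the errors $R^{-2}\|\Delta u\|_{L^2}\|u\|_{L^2}$ and $R^{-b}\|\Delta u\|_{L^2}^{2-b/2}m^{\dots}$ have nothing to be absorbed into. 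This is exactly the obstruction that leaves the standard localized argument with only the grow-up alternative.

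The paper's remedy is to accept a \emph{weighted} coercive term and make every other estimate compatible with it.
\begin{itemize}
\item It uses $\nabla\phi_R=xe^{-|x|^4/R^4}$, which is nowhere exactly quadratic. Then $I-D^2\phi_R\ge\omega_RI$ with $\omega_R=1-e^{-|x|^4/R^4}>0$ for $x\neq0$, and one keeps $-8\mathcal D_R(u)$, where $\mathcal D_R(u)=\int\omega_R|D^2u|_F^2\,dx$.
\item The nonlinear error has weight $\Theta_R\lesssim\omega_R$. It is absorbed by the dyadic weighted Gagliardo--Nirenberg estimate (Proposition~\ref{prop:weighted-GN}), where $\theta=1-b/4<1$ is what permits absorption, on the inner annuli as well as the outer ones.
\item The lower-order linear terms have coefficients $\lesssim R^{-2}\omega_R^{1/2}$ and are absorbed via Lemma~\ref{lem:weighted-gradient}.
\item The Riccati step then needs $\mathcal M_R$ controlled by $\mathcal D_R$ rather than by $\|\Delta u\|_{L^2}$. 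Proposition~\ref{prop:virial-defect} gives $|\mathcal M_R|\lesssim m+Rm^{3/4}\mathcal D_R^{1/4}$, using $|\nabla\phi_R|^4\lesssim R^4\omega_R$.
\end{itemize}

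No weight that is exactly quadratic on a ball can satisfy that last property. There the Hessian defect vanishes while $\nabla\phi_R\neq0$, and $\operatorname{Im}\int_{|x|<R}\overline u\,x\cdot\nabla u\,dx$ is not controlled by any exterior defect. Your Riccati computation is otherwise the paper's final step, with $\|\Delta u\|_{L^2}^2$ in place of $\mathcal D_R$.

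A minor point: $V_R'=M_R$ is false for the fourth-order flow. In fact $\frac{d}{dt}\int\phi_R|u|^2\,dx=4\operatorname{Im}\int\nabla\phi_R\cdot\nabla\overline u\,\Delta u\,dx+2\operatorname{Im}\int\Delta\phi_R\,\overline u\,\Delta u\,dx$, which is the familiar absence of a variance identity. You never use $V_R$, so this is harmless.
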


\begin{remark}[On the range of $b$]
The restriction
\[
0<b<\min\{4,N/2\}
\]
has two different origins. The assumption $b>0$ gives the strict inequality $1-\frac{b}{4}<1$, while $b<4$ ensures $1-\frac{b}{4}>0$. The exponent \(\theta=1-\frac{b}{4} \in (0,1)\) will play an important role in the weighted interpolation argument (see Proposition \ref{prop:weighted-GN}). On the other hand, the additional condition $b<N/2$ is inherited from the available $H^2$ local well-posedness theory for the inhomogeneous biharmonic equation; see Liu--Zhang \cite{Liu-Zhang}. In particular, the restriction $b<N/2$ is not intrinsic to the blow-up mechanism developed here.
\end{remark}

\begin{remark}[The homogeneous limit]
The argument developed in this paper is genuinely inhomogeneous. Indeed, when $b=0$, equation \eqref{eq:ibnls} reduces to the homogeneous mass-critical biharmonic NLS, while the crucial gain
\[
1-\frac{b}{4}<1
\]
degenerates to equality. Consequently, the absorption mechanism used in our proof no longer closes, and Theorem~\ref{thm:main} does not extend to the homogeneous problem by simply letting $b\downarrow0$.

During the preparation of this manuscript, we learned from Leonard Dinh that he has independently obtained finite-time blow-up for radial negative-energy solutions of the homogeneous mass-critical biharmonic NLS (see \cite{Leonard}). This resolves, in the radial setting, the finite-time blow-up problem left open by Boulenger and Lenzmann \cite{BL}, whose result allowed the alternative of global grow-up. The result in \cite{Leonard} and the present theorem are therefore complementary: his result concerns the homogeneous equation under radial symmetry, whereas our argument exploits the spatial inhomogeneity $b>0$ to obtain finite-time blow-up without radial symmetry or a finite-variance assumption.
\end{remark}

\begin{remark}[Negative energy]
The condition $E(u_0)<0$ is a sufficient blow-up criterion rather than a characterization of the blow-up region. Solutions with nonnegative energy may also blow up, and a finer description of the threshold dynamics would require additional variational information, typically involving the ground state and the associated Pohozaev functional. The present result isolates the negative-energy regime, in which finite-time blow-up holds for arbitrary $H^2$ data.
\end{remark}

We next briefly describe the main idea behind the proof of Theorem~\ref{thm:main}. The key issue is to strengthen the localized virial mechanism sufficiently to rule out the global grow-up scenario that remains possible in the pure biharmonic case. In the absence of lower-order dispersion, the standard localized virial estimates alone do not force finite-time blow-up. Our approach is inspired by the exponential-weight virial method recently introduced by the authors in \cite{Nguyen-Dinh} for the mass-critical quadratic Schr\"odinger system without mass resonance. In that setting, a bounded exponential localization retains a weighted kinetic defect and ultimately leads to a superlinear Riccati-type inequality. The present fourth-order, nonradial problem requires additional ideas, however, because of the higher-order dispersion and the localization errors generated by the singular inhomogeneous nonlinearity.

To overcome these difficulties, we introduce a localization which is never exactly quadratic away from the origin. More precisely, we choose a radial weight satisfying
$$
\nabla\phi_R(x)=x\exp\left[-(|x|/R)^4\right].
$$
Its Hessian retains a positive defect at every nonzero radius, which both controls the nonlinear localization error and provides the coercivity needed to estimate the localized virial quantity. The spatial inhomogeneity $|x|^{-b}$ enters through a weighted dyadic Gagliardo--Nirenberg estimate with exponent
$$
1-\frac b4<1,
$$
allowing the defect term to absorb the nonlinear error. A further weighted interpolation estimate then yields a superlinear Riccati-type differential inequality, which rules out global existence.

Although the exponential localization and Riccati mechanism are motivated by \cite{Nguyen-Dinh}, the present argument requires a new weighted localization analysis adapted simultaneously to fourth-order dispersion, the singular inhomogeneity, and the absence of radial symmetry.

\medskip

As an application of Theorem~\ref{thm:main}, we also obtain strong instability by finite-time blow-up of standing waves. Strong instability for fourth-order Schr\"odinger equations has been studied in the homogeneous setting by Bonheure, Cast\'eras, Gou and Jeanjean \cite{BCGJ}, and for inhomogeneous fourth-order equations in the intercritical regime by Saanouni and Ghanmi \cite{Saanouni-Ghanmi}. At the mass-critical exponent considered here, the conclusion takes a particularly simple form. Indeed, every nontrivial standing-wave profile has zero energy as a consequence of the Pohozaev identities. An arbitrarily small increase of its amplitude therefore produces negative-energy initial data, to which Theorem~\ref{thm:main} applies.

\begin{corollary}[Strong instability of standing waves]
\label{coro:instability}
Let $N\ge1$ and $0<b<\min\{4,N/2\}$. Let $\omega\in\mathbb R$, and let $0\neq Q\in H^2(\mathbb R^N)$ solve
\begin{equation}\label{eq:stationary}
\Delta^2Q+\omega Q-|x|^{-b}|Q|^{\frac{8-2b}{N}} Q=0.
\end{equation}
Then the standing wave
\[
u(t,x)=e^{i\omega t}Q(x)
\]
is strongly unstable by finite-time blow-up. More precisely, for every $\varepsilon>0$ there exists $u_0\in H^2(\mathbb R^N)$ such that
$$
\|u_0-Q\|_{H^2}<\varepsilon,
$$
and the corresponding solution of \eqref{eq:ibnls} blows up in finite time.
\end{corollary}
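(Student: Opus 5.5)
The idea is to reduce Corollary~\ref{coro:instability} to Theorem~\ref{thm:main}: we show that $E(Q)=0$ and then take $u_0=\lambda Q$ with $\lambda>1$ close to $1$.

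\emph{Step 1: Pohozaev identities.} Write $p=\frac{8-2b}{N}$ and
\[
P(Q)=\int_{\R^N}|x|^{-b}|Q|^{p+2}\,dx .
\]
Multiplying \eqref{eq:stationary} by $\bar Q$ and integrating gives
\[
\|\Delta Q\|_{L^2}^2+\omega\|Q\|_{L^2}^2-P(Q)=0 .
\]
Next consider the $L^2$-preserving dilation $Q_\lambda(x)=\lambda^{N/2}Q(\lambda x)$ and the action functional
\[
S_\omega(v)=\tfrac12\|\Delta v\|_{L^2}^2+\tfrac{\omega}2\|v\|_{L^2}^2-\tfrac{1}{p+2}P(v).
\]
Since $Q$ is a critical point of $S_\omega$, we have $\frac{d}{d\lambda}S_\omega(Q_\lambda)|_{\lambda=1}=0$. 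Under this scaling $\|\Delta Q_\lambda\|_{L^2}^2=\lambda^4\|\Delta Q\|_{L^2}^2$, the mass is invariant, and
\[
P(Q_\lambda)=\lambda^{Np/2+b}P(Q)=\lambda^4P(Q).
\]
The derivative identity therefore reads $2\|\Delta Q\|_{L^2}^2-\frac{4}{p+2}P(Q)=0$. This is exactly $E(Q)=0$, because $\frac{1}{p+2}=\frac{N}{2N+8-2b}$.

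To make this rigorous one pairs \eqref{eq:stationary} with $x\cdot\nabla\bar Q+\frac N2\bar Q$, and this needs justification at $H^2$ regularity with the singular weight. I would handle it in one of two ways. The first is to note that the map $\lambda\mapsto Q_\lambda$ is differentiable into $H^2$ only for smoother $Q$, and to use elliptic regularity of \eqref{eq:stationary}. The second is to use a cutoff $x\cdot\nabla\bar Q\,\chi(x/R)$ and let $R\to\infty$, checking that $|x|^{1-b}|Q|^{p+1}|\nabla Q|$ is integrable. I expect this regularity and justification step to be the only real technical point; the rest is algebra.

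\emph{Step 2: perturbation.} For $\lambda>1$ set $u_0=\lambda Q$. Then
\[
E(\lambda Q)=\frac{\lambda^2}{2}\|\Delta Q\|_{L^2}^2-\frac{\lambda^{p+2}}{p+2}P(Q).
\]
Using $\frac{1}{p+2}P(Q)=\frac12\|\Delta Q\|_{L^2}^2$ from Step 1, this becomes
\[
E(\lambda Q)=\frac{\lambda^2}{2}\bigl(1-\lambda^{p}\bigr)\|\Delta Q\|_{L^2}^2 .
\]
Here $\|\Delta Q\|_{L^2}>0$ because $Q\neq0$ and $Q\in H^2$ (a harmonic $L^2$ function vanishes). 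Hence $E(\lambda Q)<0$ for every $\lambda>1$.

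We also have $\|u_0-Q\|_{H^2}=(\lambda-1)\|Q\|_{H^2}$, which is less than $\varepsilon$ once $\lambda$ is close enough to $1$. Theorem~\ref{thm:main} then gives $T_\pm<\infty$ for the solution with data $u_0$. Since $\varepsilon$ is arbitrary, this is strong instability by blow-up.
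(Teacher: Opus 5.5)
Your proposal is correct and follows the paper's proof. A Pohozaev-type identity gives $E(Q)=0$, and then $u_0=cQ$ with $c>1$ close to $1$ satisfies $E(cQ)=\frac{c^2}{2}(1-c^{p})\|\Delta Q\|_{L^2}^2<0$, so Theorem~\ref{thm:main} applies. Your dilation identity, obtained by pairing with $x\cdot\nabla\bar Q+\frac N2\bar Q$, is exactly the paper's second Pohozaev identity minus $\frac N2$ times the Nehari identity. Your explicit remark that $\|\Delta Q\|_{L^2}>0$ makes precise a point the paper leaves implicit.
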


\begin{remark}
The conclusion of Corollary~\ref{coro:instability} holds for every nontrivial standing wave. In particular, no ground-state assumption, radial symmetry, or variational minimizing property is required.
\end{remark}

\begin{remark}
Any nontrivial solution $Q$ of \eqref{eq:stationary} necessarily satisfies $\omega>0$. This is an immediate consequence of the Pohozaev identities for the stationary equation; see Lemma~\ref{lem:pohozaev-standing}.
\end{remark}

Throughout the remainder of the paper, we set 
\begin{equation} \label{eq:def-pq}
p:=\frac{8-2b}{N}, \quad q:=p+2=\frac{2N+8-2b}{N}.
\end{equation}
The conserved energy takes the form
\[
E(u)= \frac12\|\Delta u\|_{L^2}^2 -\frac1q\int_{\mathbb R^N}|x|^{-b}|u|^q dx.
\]
We also denote
\[
m:=M(u_0)=\|u(t)\|_{L^2}^2.
\]
The paper is organized as follows. In Section~2, we introduce the exponential multiplier and the associated defect quantities. Section~3 is devoted to the weighted interpolation estimates that control the inhomogeneous nonlinear and lower-order error terms. In Section~4, we derive the coercive localized virial inequality, while Section~5 establishes the virial--defect estimate needed to close the argument. Section~6 contains the proof of the main finite-time blow-up theorem. Finally, in Section~7, we prove the strong instability of standing waves.

\section{The exponential multiplier and its defect}
\label{sec:multiplier}

For $R>0$, set $r=|x|$, $s=r/R$, and
\[
h(s):=e^{-s^4}.
\]
We define the radial function $\phi_R$ by
\[
\phi_R'(r)=rh(r/R), \quad \phi_R(0)=0.
\]
Equivalently,
\[
\nabla\phi_R(x)=xh(|x|/R).
\]
Associated with this multiplier is the defect
\[
\omega_R(x):=1-h(|x|/R)=1-e^{-|x|^4/R^4}.
\]
The elementary estimates
\[
c\min\left\{\frac{|x|^4}{R^4},1\right\}
\le
\omega_R(x)
\le
\min\left\{\frac{|x|^4}{R^4},1\right\}
\]
hold uniformly in $R>0$. We shall repeatedly use the positive quantity
\[
\mathcal D_R(u) := \int_{\mathbb R^N} \omega_R(x)|D^2u(x)|_F^2 dx,
\]
where
\[
|D^2u|_F^2:=\sum_{j,k=1}^N|\partial_{jk}u|^2.
\]
By Plancherel's identity,
\begin{equation} \label{eq:plancherel-iden}
\int_{\mathbb R^N}|D^2u|_F^2 dx=\|\Delta u\|_{L^2}^2.
\end{equation}

We first record the geometry of the multiplier.

\begin{lemma}[Geometry of the exponential multiplier]
\label{lem:multiplier-geometry}
The tangential and radial eigenvalues of $D^2\phi_R$ are respectively
\[
\lambda_{\rm tan}(r)=h(s), \quad \lambda_{\rm rad}(r)=h(s)(1-4s^4).
\]
Consequently,
\[
I-D^2\phi_R\ge\omega_R I
\]
as quadratic forms. Moreover,
\[
\Delta\phi_R=h(s)(N-4s^4).
\]
\end{lemma}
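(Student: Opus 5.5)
The plan is a direct computation of the Hessian of a radial function. For a radial $\phi(x)=\varphi(r)$, the standard formula is
\[
D^2\phi(x)=\frac{\varphi'(r)}{r}\Big(I-\frac{x\otimes x}{r^2}\Big)+\varphi''(r)\frac{x\otimes x}{r^2}, \quad x\neq0.
\]
This gives tangential eigenvalue $\varphi'(r)/r$, with multiplicity $N-1$, and radial eigenvalue $\varphi''(r)$. Since $\nabla\phi_R(x)=xh(|x|/R)$ is smooth on all of $\R^N$ (as $h(s)=e^{-s^4}$ is a smooth function of $s^4=|x|^4/R^4$), I would equivalently differentiate $\partial_j(x_kh)$ directly:
\[
\partial_j\partial_k\phi_R=\delta_{jk}h(s)+x_k\,h'(s)\frac{x_j}{rR}.
\]
Here $h'(s)=-4s^3h(s)$, so $h'(s)\,r/R=-4s^4h(s)$. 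Hence
\[
D^2\phi_R=h(s)\Big(I-4s^4\frac{x\otimes x}{r^2}\Big),
\]
which reads off $\lambda_{\rm tan}=h(s)$ and $\lambda_{\rm rad}=h(s)(1-4s^4)$. At $x=0$ the formula $D^2\phi_R(0)=I$ is consistent by continuity.

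For the quadratic form inequality, I would compute
\[
I-D^2\phi_R=(1-h)I+4s^4h\,\frac{x\otimes x}{r^2}.
\]
The second term is positive semidefinite because $s^4h\ge0$ and $x\otimes x/r^2$ is an orthogonal projection. Therefore $I-D^2\phi_R\ge(1-h)I=\omega_R I$. Equivalently, on the tangential eigenspace one has $1-h=\omega_R$, and on the radial eigenspace $1-h+4s^4h\ge\omega_R$.

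Finally, taking the trace gives
\[
\Delta\phi_R=Nh(s)-4s^4h(s)=h(s)(N-4s^4).
\]
The argument has no real obstacle. The only points needing care are the chain rule factor $h'(s)\,x_j/(rR)$ and the removable singularity at the origin, and both are settled by the direct Cartesian computation above.
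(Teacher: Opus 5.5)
Your proposal is correct and follows essentially the same route as the paper. Both compute the Hessian of the radial weight, read off $\lambda_{\rm tan}=h(s)$ and $\lambda_{\rm rad}=h(s)(1-4s^4)$, bound $D^2\phi_R$ above by $h(s)I=(1-\omega_R)I$, and obtain $\Delta\phi_R$ (the paper via $\phi_R''+\frac{N-1}{r}\phi_R'$, you via the trace). Your explicit Cartesian form $D^2\phi_R=h(s)\bigl(I-4s^4\frac{x\otimes x}{r^2}\bigr)$ also makes the smoothness and the value $D^2\phi_R(0)=I$ at the origin transparent, which the paper leaves implicit.
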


\begin{proof}
For a radial function,
\[
\lambda_{\rm tan}(r)=\frac{\phi_R'(r)}r=h(s).
\]
On the other hand,
\[
\phi_R''(r) =h(s)+s h'(s) =h(s)(1-4s^4),
\]
which gives the radial eigenvalue. Both eigenvalues are bounded above by
$h(s)=1-\omega_R$, and hence
\[
D^2\phi_R\le(1-\omega_R)I.
\]
Therefore
\[
I-D^2\phi_R\ge\omega_RI.
\]
Finally,
\[
\Delta\phi_R =\phi_R''+\frac{N-1}{r}\phi_R' =h(s)(N-4s^4).
\]
\end{proof}

The derivatives of $\phi_R$ enjoy bounds adapted to the same defect.

\begin{lemma}[Derivative bounds]
\label{lem:multiplier-derivatives}
There exists $C_N>0$ such that
\[
|\nabla^2\Delta\phi_R|+|\Delta^2\phi_R|\le C_NR^{-2}\omega_R^{1/2},
\quad
|\Delta^3\phi_R|\le C_NR^{-4}.
\]
\end{lemma}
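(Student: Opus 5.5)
The plan is to reduce everything to one-variable radial calculus. Write $\phi_R(x)=R^2\Phi(|x|/R)$ with $\Phi'(s)=s\,h(s)$. Then $\nabla^k\phi_R(x)=R^{2-k}(\nabla^k\Phi)(x/R)$, so $\nabla^2\Delta\phi_R$ and $\Delta^2\phi_R$ carry the factor $R^{-2}$ and $\Delta^3\phi_R$ the factor $R^{-4}$. Since $\omega_R(x)=\omega_1(x/R)$, it suffices to take $R=1$ and prove
\[
|\nabla^2\Delta\Phi(y)|+|\Delta^2\Phi(y)|\le C_N\,\omega_1(y)^{1/2},\qquad |\Delta^3\Phi(y)|\le C_N .
\]

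Next compute the Laplacians explicitly. By Lemma~\ref{lem:multiplier-geometry}, $\Delta\Phi=g(s):=h(s)(N-4s^4)=Ne^{-s^4}-4s^4e^{-s^4}$. This is a smooth even function of $s$; in fact it is a smooth function of $s^4=|y|^4$, hence smooth on $\R^N$ and Schwartz. Consequently $\Delta^2\Phi=\Delta g$ and $\Delta^3\Phi=\Delta^2 g$ are bounded, with constants depending only on $N$. This immediately gives the second bound, and boundedness of the first group for $|y|\ge1$, where $\omega_1\ge c>0$.

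It remains to treat $|y|\le1$, where $\omega_1(y)\simeq|y|^4$, so $\omega_1^{1/2}\simeq|y|^2$. Here one should check that $\nabla^2 g$ and $\Delta g$ vanish to second order at the origin. Write $g=G(|y|^4)$ with $G(\tau)=(N-4\tau)e^{-\tau}$ smooth. Then $g(y)=G(0)+G'(0)|y|^4+O(|y|^8)$, with all derivatives of the remainder obeying the corresponding bounds. Since $\nabla^2|y|^4$ and $\Delta|y|^4$ are homogeneous quadratic polynomials, both are $O(|y|^2)$. The remainder's second derivatives are $O(|y|^6)$. This yields $|\nabla^2 g|+|\Delta g|\lesssim|y|^2\lesssim\omega_1^{1/2}$ on $|y|\le1$.

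The only point requiring care is this vanishing at the origin. The key observation is that $h-1=-s^4+\dots$ has no $s^2$ term. Thus $D^2\phi$ deviates from the identity only at quartic order, and the two extra derivatives leave exactly a quadratic factor, matching $\omega^{1/2}$. Everything else is routine bookkeeping of scaling.
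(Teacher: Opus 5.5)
Your proof is correct and follows the same route as the paper. You scale to $R=1$, then use the Taylor expansion at the origin: $h-1$ has no $s^2$ term, so the fourth derivatives vanish to order $|y|^2\simeq\omega_1^{1/2}$. Away from the origin you rely on the boundedness and decay of the exponential profile. The only cosmetic difference is that you expand $\Delta\phi_1=G(|y|^4)$ rather than $\phi_1=\tfrac{r^2}{2}-\tfrac{r^6}{6}+O(r^{10})$, which makes the smoothness of the remainder at the origin slightly more transparent.
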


\begin{proof}
By scaling,
\[
\phi_R(x)=R^2\phi_1(x/R).
\]
Near the origin,
\[
\phi_1'(r)=re^{-r^4}=r-r^5+O(r^9),
\]
and hence
\[
\phi_1(r)=\frac{r^2}{2}-\frac{r^6}{6}+O(r^{10}).
\]
Thus every fourth derivative of $\phi_1-|x|^2/2$ is $O(|x|^2)$ near the
origin, while all sixth derivatives remain bounded. Since
\[
\omega_1^{1/2}(x)\simeq\min\{|x|^2,1\},
\]
and all derivatives of the exponential profile decay rapidly at infinity,
the corresponding estimates hold globally for $R=1$. Scaling gives the
result.
\end{proof}

\section{Weighted interpolation estimates}
\label{sec:weighted-estimates}

This section is devoted to the following weighted Gagliardo--Nirenberg estimate.

\begin{proposition}[Weighted Gagliardo--Nirenberg estimate]
\label{prop:weighted-GN}
Let $N\geq 1, 0<b<\min \{4,N/2\}$ and $p,q$ be as in \eqref{eq:def-pq}. Then, for every $\varepsilon>0$, there exists $C_{N,b,\varepsilon}>0$ such that, for every $u\in H^2(\mathbb R^N)$ and every $R>0$,
\begin{equation} \label{eq:weighted-GN}
\int_{\mathbb R^N} \omega_R(x)|x|^{-b}|u(x)|^q dx \leq \varepsilon \mathcal D_R(u)+C_{N,b,\varepsilon}R^{-4}\left(\|u\|_{L^2}^{2+4p/b}+\|u\|_{L^2}^{q}\right).
\end{equation}
\end{proposition}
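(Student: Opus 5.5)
The plan is to reduce to $R=1$ by scaling, decompose $\mathbb R^N\setminus\{0\}$ into dyadic annuli, and on each annulus apply a scale-invariant Gagliardo--Nirenberg inequality. Young's inequality with the exponent $\theta=1-\frac b4<1$ then lets us absorb the second-derivative part into $\varepsilon\mathcal D_R(u)$. The leftover terms are made summable in $k$ by charging them to the \emph{local} mass of $u$ on each annulus.

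\textbf{Step 1: reduction to $R=1$.} For $u_R(x):=R^{N/2}u(Rx)$ we have $\|u_R\|_{L^2}=\|u\|_{L^2}$. A direct change of variables gives
\[
\int\omega_1|x|^{-b}|u_R|^q\,dx=R^{4}\int\omega_R|x|^{-b}|u|^q\,dx
\quad\text{and}\quad
\mathcal D_1(u_R)=R^4\mathcal D_R(u).
\]
Both use mass-criticality: $\frac{Np}{2}+b=4$. Hence \eqref{eq:weighted-GN} for $R=1$ implies it for every $R$.

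\textbf{Step 2: annular Gagliardo--Nirenberg.} For $k\in\mathbb Z$ let $A_k=\{2^k\le|x|<2^{k+1}\}$. We write
\[
m_k=\|u\|_{L^2(A_k)}^2,\qquad D_k=\int_{A_k}|D^2u|_F^2\,dx.
\]
Fix a reference annulus $A_0$, which is a bounded Lipschitz domain, so an extension operator $H^2(A_0)\to H^2(\mathbb R^N)$ exists. Combining it with the mass-critical inhomogeneous-free Gagliardo--Nirenberg inequality
\[
\|f\|_{L^q}^q\lesssim\|f\|_{L^2}^{q-2\theta}\|f\|_{H^2}^{2\theta},
\qquad 2\theta=\tfrac{Np}{4},
\]
and rescaling to $A_k$, we obtain
\[
\int_{A_k}|u|^q\le C\,m_k^{\frac{q-2\theta}{2}}\bigl(D_k+2^{-4k}m_k\bigr)^{\theta},
\]
with $C$ independent of $k$. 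Here the lower-order $H^2(A_0)$ terms, including $\nabla u$, are controlled through interpolation on $A_0$ by $D^2u$ and $u$.

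\textbf{Step 3: weights and absorption.} On $A_k$ we have $|x|^{-b}\simeq 2^{-kb}$. By the bounds on $\omega_1$ stated in Section~2, $\omega_1\simeq\min\{2^{4k},1\}=:w_k$ there. Set $\mathcal D^{(k)}:=\int_{A_k}\omega_1|D^2u|_F^2\simeq w_kD_k$.

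\emph{Case $k<0$.} Using $4-b=4\theta$, the contribution of $A_k$ is bounded by
\[
C\,m_k^{\frac{q-2\theta}{2}}\Bigl[(\mathcal D^{(k)})^\theta+m_k^\theta\Bigr].
\]

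\emph{Case $k\ge0$.} Here $w_k=1$ and $2^{-kb}\le1$, and the same bound holds.

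In both cases, Young's inequality with exponents $\frac1\theta$ and $\frac1{1-\theta}=\frac4b$ gives
\[
m_k^{\frac{q-2\theta}{2}}(\mathcal D^{(k)})^\theta\le\varepsilon\,\mathcal D^{(k)}+C_\varepsilon\, m_k^{\frac{q-2\theta}{2(1-\theta)}}.
\]
The new exponent is $\frac{q-2\theta}{2(1-\theta)}=1+\frac{2p}{b}>1$, so
\[
m_k^{1+2p/b}\le m^{2p/b}m_k.
\]
Likewise the lower-order term satisfies $m_k^{q/2}\le m^{q/2-1}m_k$. Summing over $k$, and using $\sum_k m_k=m$ and $\sum_k\mathcal D^{(k)}=\mathcal D_1(u)$, yields
\[
\int\omega_1|x|^{-b}|u|^q\le\varepsilon C\,\mathcal D_1(u)+C_\varepsilon\bigl(m^{1+2p/b}+m^{q/2}\bigr).
\]
After renaming $\varepsilon$, this is exactly \eqref{eq:weighted-GN} for $R=1$.

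\textbf{Main obstacle.} Young's inequality alone, with constants $C_\varepsilon m^{\cdots}$ on each annulus, would diverge when summed over infinitely many $k<0$. This is because the weights balance exactly, $4-b-4\theta=0$, so there is no geometric decay in $k$. The essential point is that the exponents on $m_k$ exceed $1$, namely $1+\frac{2p}{b}$ and $\frac q2$, so the errors are summable through local masses. The other delicate point is making the constant in Step~2 uniform in $k$; this follows from scale invariance once it is established on the single annulus $A_0$.
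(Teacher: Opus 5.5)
Your proof is correct and follows essentially the paper's route: dyadic annuli, a scale-invariant local Gagliardo--Nirenberg bound, the exact balance $4-b=4\theta$, and the exponent $1+\frac{2p}{b}>1$, which lets the $C_\varepsilon$ remainders from Young's inequality (with $\theta<1$) be summed against local masses. The differences are only technical: you obtain the uniform annular bound from an extension operator on $A_0$ plus rescaling, where the paper uses cutoffs on enlarged annuli and an integration-by-parts gradient bound; the extension operator must be bounded simultaneously on $L^2$ and $H^2$ (e.g.\ Stein's), since the factor $m_k^{(q-2\theta)/2}$ depends on it. Likewise, you apply Young's inequality annulus by annulus, whereas the paper uses H\"older across annuli followed by one global Young's inequality.
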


\begin{proof}
Set
\begin{equation} \label{eq:choi-theta-gamma}
\theta:=1-\frac b4\in(0,1), \quad \gamma:=\frac q2-\theta=\frac p2+\frac b4.
\end{equation}
Since $q=p+2$ and $Np=8-2b$, we have
\[
\theta=\frac{N(q-2)}8=\frac{Np}{8}.
\]

\medskip
\noindent
\emph{Step 1. Annular localization and control of the localized $H^2$ norm.}

For $j\in\mathbb Z$, set
$$
\rho_j:=2^jR, \quad A_j:=\{x\in\mathbb R^N:\rho_j<|x|<2\rho_j\}.
$$
Choose enlarged annuli
$$
A_j\Subset A_j^*\Subset A_j^{**}
$$
with uniformly finite overlap, and choose
$\eta_j\in C_c^\infty(A_j^*)$ such that
$$
\eta_j\equiv1\quad\text{on }A_j, \quad |\nabla^k\eta_j|\lesssim \rho_j^{-k},
\quad k=1,2.
$$
Define
$$
d_j:=\min\left\{\left(\frac{\rho_j}{R}\right)^4,1\right\},
$$
and
$$
m_j:=\int_{A_j^{**}}|u|^2 dx,
\quad
D_j:=\int_{A_j^{**}}\omega_R(x)|D^2u|_F^2 dx.
$$
Since $|x|\simeq\rho_j$ on $A_j^{**}$ and
$$
\omega_R(x)\simeq \min\left\{\frac{|x|^4}{R^4},1\right\},
$$
we have
$$
\omega_R(x)\simeq d_j, \quad x\in A_j^{**},
$$
with constants independent of $j$ and $R$.

We claim that
\begin{equation}
\label{eq:claim-annuli}
\|D^2(\eta_j u)\|_{L^2}^2 \lesssim d_j^{-1}D_j+\rho_j^{-4}m_j.
\end{equation}
Indeed, the product rule and the bounds on $\eta_j$ give
$$
\|D^2(\eta_j u)\|_{L^2}^2 \lesssim \|D^2u\|_{L^2(A_j^*)}^2 +\rho_j^{-2}\|\nabla u\|_{L^2(A_j^*)}^2+\rho_j^{-4}m_j.
$$
To estimate the middle term, choose $\chi_j\in C_c^\infty(A_j^{**})$ satisfying
$$
\chi_j\equiv1\quad\text{on }A_j^*, \quad |\nabla\chi_j|\lesssim\rho_j^{-1}.
$$
Integration by parts gives
$$
\begin{aligned}
\int \chi_j^2|\nabla u|^2dx
&=-\operatorname{Re}\int \chi_j^2\overline u\Delta udx -2\operatorname{Re}\int \chi_j\overline u\nabla\chi_j\cdot\nabla udx.
\end{aligned}
$$
By Cauchy--Schwarz,
$$
\left|\int\chi_j^2\overline u\Delta udx\right|
\lesssim m_j^{1/2} \|D^2u\|_{L^2(A_j^{**})},
$$
while Young's inequality yields
$$
\left|2\int \chi_j\overline u\nabla\chi_j\cdot\nabla udx\right|
\le \frac12\int\chi_j^2|\nabla u|^2dx
+ C\rho_j^{-2}m_j.
$$
Hence
\begin{equation}
\label{eq:localized-gradient}
\|\nabla u\|_{L^2(A_j^*)}^2
\lesssim
m_j^{1/2} \|D^2u\|_{L^2(A_j^{**})}
+\rho_j^{-2}m_j.
\end{equation}
Multiplying by $\rho_j^{-2}$ and applying Young's inequality once more,
$$
\rho_j^{-2} \|\nabla u\|_{L^2(A_j^*)}^2 \lesssim \|D^2u\|_{L^2(A_j^{**})}^2+\rho_j^{-4}m_j.
$$
Since $\omega_R\simeq d_j$ on $A_j^{**}$,
$$
\|D^2u\|_{L^2(A_j^{**})}^2 \lesssim d_j^{-1}D_j.
$$
This proves \eqref{eq:claim-annuli}.

\medskip
\noindent
\emph{Step 2. Local Gagliardo--Nirenberg estimate on each annulus.}

We recall the following $H^2$ Gagliardo--Nirenberg inequality (see e.g. \cite{Nirenberg}):
\[
\|u\|^q_{L^q} \lesssim \|D^2 u \|^{\frac{N(q-2)}{4}}_{L^2} \|u\|^{q-\frac{N(q-2)}{4}}_{L^2},
\]
which, by \eqref{eq:def-pq} and \eqref{eq:choi-theta-gamma}, gives
\[
\|u\|^q_{L^q} \lesssim \|D^2 u \|^{2\theta}_{L^2} \|u\|^{2\gamma}_{L^2}.
\]
Since $\eta_j\equiv1$ on $A_j$, the above inequality gives
$$
\int_{A_j}|u|^qdx
\le \|\eta_j u\|_{L^q}^q
\lesssim \|D^2(\eta_j u)\|_{L^2}^{2\theta} \|\eta_j u\|_{L^2}^{2\gamma}.
$$
Using \eqref{eq:claim-annuli}, together with
$(a+b)^\theta\lesssim a^\theta+b^\theta$ for $0<\theta<1$, we obtain
$$
\int_{A_j}|u|^qdx \lesssim d_j^{-\theta}D_j^\theta m_j^\gamma +\rho_j^{-4\theta}m_j^{q/2}.
$$
On $A_j$ we also have
$$
\omega_R(x)|x|^{-b} \lesssim d_j\rho_j^{-b}.
$$
Therefore
$$
\int_{A_j} \omega_R|x|^{-b}|u|^qdx \lesssim \rho_j^{-b}d_j^{1-\theta} D_j^\theta m_j^\gamma+d_j\rho_j^{-b-4\theta}m_j^{q/2}.
$$
The two coefficients have the uniform bounds
$$
\rho_j^{-b}d_j^{1-\theta}\lesssim R^{-b},
\quad
d_j\rho_j^{-b-4\theta}\lesssim R^{-4}.
$$
Indeed, since
$$
1-\theta=\frac b4, \quad b+4\theta=4,
$$
if $\rho_j\le R$, then
$d_j=(\rho_j/R)^4$, and hence
$$
\rho_j^{-b}d_j^{1-\theta} =R^{-b},
\quad
d_j\rho_j^{-b-4\theta}=R^{-4}.
$$
If $\rho_j\ge R$, then $d_j=1$, so
$$
\rho_j^{-b}\le R^{-b},
\quad
\rho_j^{-b-4\theta}=\rho_j^{-4}\le R^{-4}.
$$
Consequently,
\begin{equation}
\label{eq:annulus-weighted}
\int_{A_j}
\omega_R|x|^{-b}|u|^qdx
\lesssim
R^{-b}D_j^\theta m_j^\gamma+R^{-4}m_j^{q/2}.
\end{equation}

\medskip
\noindent
\emph{Step 3. Summation over the annuli.}

Summing \eqref{eq:annulus-weighted} over $j$ gives
$$
\int_{\mathbb R^N}\omega_R|x|^{-b}|u|^qdx\lesssim R^{-b}\sum_jD_j^\theta m_j^\gamma+R^{-4}\sum_jm_j^{q/2}.
$$
Because the enlarged annuli have uniformly finite overlap,
$$
\sum_jD_j\lesssim\mathcal D_R(u),
\quad
\sum_jm_j\lesssim\|u\|_{L^2}^2.
$$
By H\"older's inequality,
$$
\sum_jD_j^\theta m_j^\gamma
\le \left(\sum_jD_j\right)^\theta
\left(\sum_jm_j^{\gamma/(1-\theta)}\right)^{1-\theta}.
$$
Since
$$
\frac{\gamma}{1-\theta}=1+\frac{2p}{b}>1,
$$
we have
$$
\sum_jm_j^{\gamma/(1-\theta)}\le \left(\sum_jm_j\right)^{\gamma/(1-\theta)},
$$
and hence
$$
\left(\sum_jm_j^{\gamma/(1-\theta)}\right)^{1-\theta} \lesssim \|u\|_{L^2}^{2\gamma}.
$$
Similarly, since $q/2>1$,
$$
\sum_jm_j^{q/2} \le \left(\sum_jm_j\right)^{q/2}\lesssim \|u\|_{L^2}^{q}.
$$
We therefore obtain
\begin{equation}
\label{eq:weighted-GN-preyoung}
\int_{\mathbb R^N} \omega_R|x|^{-b}|u|^qdx
\lesssim R^{-b}\|u\|_{L^2}^{2\gamma} \mathcal D_R(u)^\theta+ R^{-4}\|u\|_{L^2}^{q}.
\end{equation}

\medskip
\noindent
\emph{Step 4. Absorption of the defect term.}

By Young's inequality, for every $\varepsilon>0$,
$$
R^{-b}\|u\|_{L^2}^{2\gamma} \mathcal D_R(u)^\theta \le \varepsilon\mathcal D_R(u)
+ C_{N,b,\varepsilon} R^{-b/(1-\theta)}\|u\|_{L^2}^{2\gamma/(1-\theta)}.
$$
Finally,
$$
\frac{b}{1-\theta}=4, \quad \frac{2\gamma}{1-\theta}=2+\frac{4p}{b}.
$$
Inserting these identities into \eqref{eq:weighted-GN-preyoung} yields
$$
\int_{\mathbb R^N} \omega_R(x)|x|^{-b}|u(x)|^qdx
\le\varepsilon\mathcal D_R(u)+C_{N,b,\varepsilon}R^{-4}
\left(\|u\|_{L^2}^{2+4p/b}+\|u\|_{L^2}^{q}\right),
$$
which proves \eqref{eq:weighted-GN}.
\end{proof}

The lower-order linear commutators require a second interpolation estimate.

\begin{lemma}[Weighted gradient estimate]
\label{lem:weighted-gradient}
For every $\varepsilon>0$, there exists $C_{N,\varepsilon}>0$ such that, for every $u \in H^2(\R^N)$ and every $R>0$,
$$
R^{-2}\int_{\mathbb R^N}\omega_R^{1/2}|\nabla u|^2 dx
\le \varepsilon\mathcal D_R(u)
+ C_{N,\varepsilon}R^{-4}\|u\|_{L^2}^2.
$$
\end{lemma}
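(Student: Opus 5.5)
We follow the dyadic scheme from the proof of Proposition~\ref{prop:weighted-GN}, with the same annuli $A_j$, $A_j^*$, $A_j^{**}$, the same cutoffs $\chi_j$, and the same quantities $\rho_j=2^jR$, $d_j$, $m_j$, $D_j$. On $A_j$ we have $\omega_R^{1/2}\simeq d_j^{1/2}$. The annuli $A_j$, $j\in\mathbb Z$, cover $\R^N\setminus\{0\}$ up to a null set, so
\[
R^{-2}\int_{\R^N}\omega_R^{1/2}|\nabla u|^2\,dx\lesssim \sum_j R^{-2}d_j^{1/2}\|\nabla u\|_{L^2(A_j^*)}^2 .
\]
It therefore suffices to estimate each annular term by $\varepsilon D_j+C_\varepsilon R^{-4}m_j$ and then to sum using finite overlap.

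\emph{The annular bound.} We start from the localized gradient estimate \eqref{eq:localized-gradient}, already proved in Step~1:
\[
\|\nabla u\|_{L^2(A_j^*)}^2\lesssim m_j^{1/2}\|D^2u\|_{L^2(A_j^{**})}+\rho_j^{-2}m_j .
\]
Since $\omega_R\simeq d_j$ on $A_j^{**}$, we have $\|D^2u\|_{L^2(A_j^{**})}\lesssim d_j^{-1/2}D_j^{1/2}$. Multiplying by $R^{-2}d_j^{1/2}$ gives
\[
R^{-2}d_j^{1/2}\|\nabla u\|_{L^2(A_j^*)}^2\lesssim R^{-2}m_j^{1/2}D_j^{1/2}+R^{-2}\rho_j^{-2}d_j^{1/2}m_j .
\]
The first term is handled by Young's inequality: $R^{-2}m_j^{1/2}D_j^{1/2}\le \varepsilon D_j+C_\varepsilon R^{-4}m_j$. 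For the second term we need the coefficient bound $\rho_j^{-2}d_j^{1/2}\lesssim R^{-2}$, which we check in two cases.
\begin{itemize}
\item If $\rho_j\le R$, then $d_j^{1/2}=(\rho_j/R)^2$, so $\rho_j^{-2}d_j^{1/2}=R^{-2}$.
\item If $\rho_j\ge R$, then $d_j=1$, so $\rho_j^{-2}d_j^{1/2}=\rho_j^{-2}\le R^{-2}$.
\end{itemize}
Hence the second term is $\lesssim R^{-4}m_j$. This is exactly where the choice of weight $\omega_R^{1/2}$, rather than $\omega_R$ or $1$, matches the scaling.

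\emph{Summation.} Summing over $j$ and using finite overlap of the $A_j^{**}$, we have $\sum_j D_j\lesssim\mathcal D_R(u)$ and $\sum_j m_j\lesssim\|u\|_{L^2}^2$. Both bounds are linear, so no H\"older step is needed, unlike in Proposition~\ref{prop:weighted-GN}. Replacing $\varepsilon$ by $\varepsilon/C$ to absorb the implicit constants then yields the claim.

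\emph{Where care is needed.} The main point of care is the regime of small annuli, $\rho_j\ll R$. There the factor $d_j^{-1/2}$ produced by converting $\|D^2u\|_{L^2(A_j^{**})}$ into $D_j$ must be exactly compensated by the weight $\omega_R^{1/2}\simeq d_j^{1/2}$. Likewise, the lower-order term $\rho_j^{-2}m_j$ must not blow up as $\rho_j\to0$, which is guaranteed by the factor $d_j^{1/2}=(\rho_j/R)^2$. Both are verified by the computation above, with constants uniform in $j$ and $R$.
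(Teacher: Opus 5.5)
Your proposal is correct and follows essentially the same route as the paper: the localized gradient estimate \eqref{eq:localized-gradient}, the conversion $\|D^2u\|_{L^2(A_j^{**})}\lesssim d_j^{-1/2}D_j^{1/2}$, and the two-case coefficient bound $\rho_j^{-2}d_j^{1/2}\lesssim R^{-2}$. The only difference is cosmetic and equivalent: you apply Young's inequality on each annulus before summing, whereas the paper first sums the cross terms with Cauchy--Schwarz and then applies Young's inequality once globally.
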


\begin{proof}
We use the notation introduced in the proof of Proposition~\ref{prop:weighted-GN}. Since $A_j\subset A_j^*$, the localized estimate \eqref{eq:localized-gradient} gives
$$
\|\nabla u\|_{L^2(A_j)}^2
\lesssim
m_j^{1/2}\|D^2u\|_{L^2(A_j^{**})}
+\rho_j^{-2}m_j.
$$
Moreover, since $\omega_R\simeq d_j$ on $A_j^{**}$,
$$
\|D^2u\|_{L^2(A_j^{**})}
\lesssim
d_j^{-1/2}D_j^{1/2}.
$$
Hence
$$
d_j^{1/2}\|\nabla u\|_{L^2(A_j)}^2
\lesssim
m_j^{1/2}D_j^{1/2}
+d_j^{1/2}\rho_j^{-2}m_j.
$$
Since $\omega_R^{1/2}\lesssim d_j^{1/2}$ on $A_j$, summing over $j$ yields
\begin{equation}
\label{eq:grad-est-prof-1}
\begin{aligned}
R^{-2}\int_{\mathbb R^N}
\omega_R^{1/2}|\nabla u|^2 dx
&\lesssim
R^{-2}\sum_jm_j^{1/2}D_j^{1/2}
+
R^{-2}\sum_jd_j^{1/2}\rho_j^{-2}m_j.
\end{aligned}
\end{equation}
For the first sum, Cauchy--Schwarz and the finite overlap of the enlarged annuli give
\begin{equation} \label{eq:grad-est-prof-2}
\begin{aligned}
R^{-2}\sum_jm_j^{1/2}D_j^{1/2}
&\le
R^{-2}\left(\sum_jm_j\right)^{1/2}
\left(\sum_jD_j\right)^{1/2} \
&\lesssim
R^{-2}\|u\|_{L^2}\mathcal D_R(u)^{1/2}.
\end{aligned}
\end{equation}
It remains to estimate the second sum in \eqref{eq:grad-est-prof-1}. We distinguish the inner and outer annuli.

If $\rho_j< R$, then
$$
d_j^{1/2}=\left(\frac{\rho_j}{R}\right)^2,
$$
and therefore
$$
R^{-2}d_j^{1/2}\rho_j^{-2}
=R^{-2}\left(\frac{\rho_j}{R}\right)^2\rho_j^{-2}
=R^{-4}.
$$
Thus
\begin{equation}\label{eq:grad-est-prof-3}
R^{-2}\sum_{\rho_j\le R} d_j^{1/2}\rho_j^{-2}m_j
\lesssim
R^{-4}\sum_{\rho_j\le R}m_j
\lesssim
R^{-4}\|u\|_{L^2}^2.
\end{equation}

If $\rho_j\ge R$, then $d_j=1$. Since $\rho_j=2^jR \ge R$ for $j\ge0$,
$$
R^{-2}d_j^{1/2}\rho_j^{-2}
=R^{-2}\rho_j^{-2}
=2^{-2j}R^{-4}.
$$
Consequently,
\begin{equation} \label{eq:grad-est-prof-4}
\begin{aligned}
R^{-2}
\sum_{\rho_j\ge R}
d_j^{1/2}\rho_j^{-2}m_j
&\le
R^{-4}\sum_{j\ge0}2^{-2j}m_j \
&\lesssim
R^{-4}\sum_{j\ge0}m_j
\lesssim
R^{-4}\|u\|_{L^2}^2.
\end{aligned}
\end{equation}
Combining \eqref{eq:grad-est-prof-1}--\eqref{eq:grad-est-prof-4}, we obtain
$$
R^{-2}\int_{\mathbb R^N}
\omega_R^{1/2}|\nabla u|^2 dx
\lesssim
R^{-2}\|u\|_{L^2}\mathcal D_R(u)^{1/2}
+R^{-4}\|u\|_{L^2}^2.
$$
Finally, Young's inequality gives
$$
R^{-2}\|u\|_{L^2}\mathcal D_R(u)^{1/2}
\le
\varepsilon\mathcal D_R(u)
+C_\varepsilon R^{-4}\|u\|_{L^2}^2.
$$
\enlargethispage{\baselineskip}
This proves the desired estimate.
\end{proof}

\section{The coercive localized virial estimate}
\label{sec:virial}

For a smooth real-valued weight $\phi$, define
\[
\mathcal M_\phi(u)=2\operatorname{Im}
\int_{\mathbb R^N} \overline u \nabla\phi\cdot\nabla u dx.
\]
For the equation
\[
i\partial_tu=\Delta^2u-|x|^{-b}|u|^pu,
\]
the localized biharmonic virial identity is
\[
\begin{aligned}
\frac{d}{dt}\mathcal M_\phi(u)={}&8\operatorname{Re}\sum_{i,j,k=1}^N
\int(\partial_{jk}\phi)(\partial_{ik}u)\overline{\partial_{ij}u} dx\\
&-4\operatorname{Re}\sum_{j,k=1}^N\int(\partial_{jk}\Delta\phi)(\partial_ju) \overline{\partial_ku} dx\\
&-2\int(\Delta^2\phi)|\nabla u|^2 dx+\int(\Delta^3\phi)|u|^2 dx\\
&-\frac{2p}{q}\int (\Delta\phi)|x|^{-b}|u|^q dx
+\frac4q \int \nabla\phi\cdot\nabla(|x|^{-b})|u|^q dx,
\end{aligned}
\]
where $q=p+2$. This is the standard biharmonic commutator identity, see e.g. \cite[Lemma 3.1]{BL} and \cite[Lemma 4.3]{Dinh} for the homogeneous setting, and \cite[Lemma 3.3]{Saanouni} for the corresponding inhomogeneous identity. Although \cite[Lemma 3.3]{Saanouni} is stated for $N\ge3$, the same commutator computation is valid for $N=1,2$.

For the exponential weight $\phi_R$ introduced in Section \ref{sec:multiplier}, we define the localized virial quantity
\[
\mathcal M_R(t):=\mathcal M_{\phi_R}(u(t))=2\operatorname{Im} \int_{\mathbb R^N} \overline{u(t,x)} \nabla\phi_R(x)\cdot\nabla u(t,x) dx.
\]
Since $\|\nabla\phi_R\|_{L^\infty}\lesssim R$, this is well defined for every $u\in H^2(\mathbb R^N)$.

\begin{proposition}[Coercive virial estimate]
\label{prop:coercive-virial}
Let $u$ be an $H^2$-solution to \eqref{eq:ibnls} defined on the maximal time interval $(-T_-,T_+)$. There exist constants $c_{N,b},C_{N,b}>0$ such that, for every $R>0$ and every $t\in (-T_-,T_+)$,
\[
\mathcal M_R'(t) \le 16E(u_0) -c_{N,b}\mathcal D_R(u(t))+C_{N,b}R^{-4}F_{b,p}(m),
\]
where
\[
F_{b,p}(m):=m+m^{q/2}+m^{1+2p/b}.
\]
\end{proposition}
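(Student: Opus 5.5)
The plan is to evaluate the localized biharmonic virial identity for $\phi=\phi_R$ term by term. The principal part should reproduce $16E(u_0)$ exactly, and every remaining term should either be bounded above by $-8\mathcal D_R$ or be absorbable into $\mathcal D_R$ by Proposition~\ref{prop:weighted-GN} and Lemma~\ref{lem:weighted-gradient}.

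\emph{Step 1 (Hessian term).} For each fixed $i$, apply the quadratic-form bound $D^2\phi_R\le(1-\omega_R)I$ from Lemma~\ref{lem:multiplier-geometry} to the complex vector $v_i=(\partial_{ik}u)_{k}$. For a real symmetric matrix $A$, $\operatorname{Re}\langle Av,v\rangle$ is controlled by the same form applied to the real and imaginary parts, so the bound holds pointwise. This gives
\[
8\operatorname{Re}\sum_{i,j,k}\int(\partial_{jk}\phi_R)\partial_{ik}u\,\overline{\partial_{ij}u}\,dx\le 8\int(1-\omega_R)|D^2u|_F^2dx=8\|\Delta u\|_{L^2}^2-8\mathcal D_R(u),
\]
where the last equality uses \eqref{eq:plancherel-iden}.

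\emph{Step 2 (nonlinear terms).} Use $\nabla\phi_R\cdot\nabla|x|^{-b}=-b\,h\,|x|^{-b}$ and $\Delta\phi_R=h(N-4s^4)$. Since $2pN+4b=16$, the two nonlinear terms combine into
\[
-\frac{16}{q}\int h|x|^{-b}|u|^q\,dx+\frac{8p}{q}\int hs^4|x|^{-b}|u|^q\,dx .
\]
Next write $h=1-\omega_R$ in the first integral, and use the elementary bound $s^4e^{-s^4}\lesssim\min\{s^4,1\}\lesssim\omega_R$ in the second. The nonlinear part is then at most
\[
-\frac{16}{q}\int|x|^{-b}|u|^q\,dx+C\int\omega_R|x|^{-b}|u|^q\,dx.
\]
Together with Step~1, the principal terms give $8\|\Delta u\|^2-\frac{16}{q}\int|x|^{-b}|u|^q=16E(u(t))=16E(u_0)$ by energy conservation.

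\emph{Step 3 (error terms).} The remaining terms are handled as follows.
\begin{itemize}[leftmargin=*]
\item The two gradient terms, with coefficients $\partial_{jk}\Delta\phi_R$ and $\Delta^2\phi_R$, are pointwise bounded by $CR^{-2}\omega_R^{1/2}|\nabla u|^2$ by Lemma~\ref{lem:multiplier-derivatives}. Lemma~\ref{lem:weighted-gradient} then bounds them by $\varepsilon\mathcal D_R+C_\varepsilon R^{-4}m$.
\item The zeroth-order term is at most $C_NR^{-4}m$, since $|\Delta^3\phi_R|\le C_NR^{-4}$.
\item The weighted nonlinear error $C\int\omega_R|x|^{-b}|u|^q$ is bounded by Proposition~\ref{prop:weighted-GN} by $\varepsilon\mathcal D_R+C_\varepsilon R^{-4}(m^{1+2p/b}+m^{q/2})$, using $\|u\|_{L^2}^{2+4p/b}=m^{1+2p/b}$.
\end{itemize}
Choosing $\varepsilon$ small, say so that the total $\varepsilon$-contribution is at most $4\mathcal D_R$, gives the claim with $c_{N,b}=4$ and $R^{-4}F_{b,p}(m)$ collecting the leftover constants.

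The only delicate points are the sign bookkeeping in Step~2 and checking that the extra $s^4$-term coming from $\Delta\phi_R$ is indeed dominated by the defect $\omega_R$ near the origin as well as at infinity; the rest reduces to the earlier estimates.
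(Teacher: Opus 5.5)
Your proposal is correct and follows the paper's proof essentially step for step. It uses the same Hessian bound $D^2\phi_R\le(1-\omega_R)I$ to get $8\|\Delta u\|_{L^2}^2-8\mathcal D_R(u)$. It recombines the nonlinear terms via $Np+2b=8$ into $-\frac{16}{q}\int|x|^{-b}|u|^q\,dx$ plus a remainder dominated by $\omega_R$, which is the paper's $\Theta_R=8\omega_R+4ps^4h\le(8+4p)\omega_R$. And it absorbs the errors with Lemma~\ref{lem:multiplier-derivatives}, Lemma~\ref{lem:weighted-gradient} and Proposition~\ref{prop:weighted-GN}, exactly as the paper does. The only differences are cosmetic: the order of the steps, your explicit justification of the complex quadratic-form bound, and the explicit choice $c_{N,b}=4$.
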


\begin{proof}
For simplicity, write $u=u(t)$ and
\[
s=\frac{|x|}{R}, \quad h(s)=e^{-s^4}.
\]
Applying the localized biharmonic virial identity with $\phi=\phi_R$, we estimate its linear and nonlinear contributions separately.

\medskip
\noindent
\emph{Step 1. The principal Hessian term.}
By Lemma~\ref{lem:multiplier-geometry},
\[
D^2\phi_R(x)\le (1-\omega_R(x))I
\]
as quadratic forms. Hence, for every fixed $i$,
\[
\operatorname{Re} \sum_{j,k} (\partial_{jk}\phi_R) (\partial_{ik}u) \overline{\partial_{ij}u}
\le
(1-\omega_R) \sum_j|\partial_{ij}u|^2.
\]
Summing over $i$ and integrating, we obtain
\[
\begin{aligned}
8\operatorname{Re} \sum_{i,j,k} \int_{\mathbb R^N} (\partial_{jk}\phi_R) (\partial_{ik}u) \overline{\partial_{ij}u} dx
&\le
8\int_{\mathbb R^N}|D^2u|_F^2 dx - 8\mathcal D_R(u)\\
&= 8\|\Delta u\|_{L^2}^2 - 8\mathcal D_R(u),
\end{aligned}
\]
where the last equality follows from Plancherel's identity \eqref{eq:plancherel-iden}.

\medskip
\noindent
\emph{Step 2. The remaining linear terms.}
The lower-order linear contribution in the localized virial
identity is
\[
\begin{aligned}
L_R(u):={}&-4\operatorname{Re} \sum_{j,k} \int_{\mathbb R^N} (\partial_{jk}\Delta\phi_R) (\partial_j u) \overline{\partial_k u} dx\\
&\quad -2\int_{\mathbb R^N} (\Delta^2\phi_R)|\nabla u|^2 dx+\int_{\mathbb R^N}(\Delta^3\phi_R)|u|^2 dx.
\end{aligned}
\]
Notice in particular that the factor $\partial_{jk}\Delta\phi_R$ is paired with first derivatives of $u$, not with second derivatives.

By Lemma~\ref{lem:multiplier-derivatives},
\[
|\nabla^2\Delta\phi_R| + |\Delta^2\phi_R| \lesssim R^{-2}\omega_R^{1/2},
\quad
|\Delta^3\phi_R|\lesssim R^{-4}.
\]
Therefore
\[
|L_R(u)|\lesssim R^{-2} \int_{\mathbb R^N} \omega_R^{1/2}|\nabla u|^2 dx+ R^{-4}\|u\|_{L^2}^2.
\]
Using Lemma~\ref{lem:weighted-gradient}, for every $\varepsilon>0$,
\[
|L_R(u)| \le \varepsilon\mathcal D_R(u) + C_{N,\varepsilon}R^{-4}m.
\]

\medskip
\noindent
\emph{Step 3. The nonlinear contribution.}
The nonlinear part of the localized virial identity is
\[
-\frac{2p}{q} \int_{\mathbb R^N} (\Delta\phi_R)|x|^{-b}|u|^q dx + \frac4q \int_{\mathbb R^N} \nabla\phi_R\cdot\nabla(|x|^{-b})|u|^q dx.
\]
Since
\[
\Delta\phi_R=h(s)(N-4s^4)
\]
and
\[
\nabla\phi_R\cdot\nabla(|x|^{-b}) =-bh(s)|x|^{-b},
\]
this becomes
\[
-\frac2q \int_{\mathbb R^N} h(s) \bigl(Np+2b-4ps^4\bigr) |x|^{-b}|u|^q dx.
\]
Using the mass-critical relation
\[
Np+2b=8,
\]
we rewrite it as
\[
-\frac{16}{q} \int_{\mathbb R^N} |x|^{-b}|u|^q dx +\frac2q \int_{\mathbb R^N} \Theta_R(x)|x|^{-b}|u|^q dx,
\]
where
\[
\Theta_R(x):=8\omega_R(x)+4ps^4h(s).
\]
Since
\[
s^4e^{-s^4}\le 1-e^{-s^4}=\omega_R,
\]
we have
\[
0\le \Theta_R(x)\le (8+4p)\omega_R(x).
\]
Consequently, Proposition~\ref{prop:weighted-GN} yields, for every $\varepsilon>0$,
\[
\frac2q \int_{\mathbb R^N} \Theta_R(x)|x|^{-b}|u|^q dx \le \varepsilon\mathcal D_R(u) + C_{N,b,\varepsilon}R^{-4}
\left(m^{1+2p/b}+m^{q/2}\right).
\]

\medskip
\noindent
\emph{Step 4. Recovery of the conserved energy.}
Combining the principal kinetic contribution with the main
nonlinear term gives
\[
8\|\Delta u\|_{L^2}^2 - \frac{16}{q} \int_{\mathbb R^N} |x|^{-b}|u|^q dx=16E(u_0).
\]
Hence Steps 1--3 give
\[
\begin{aligned} \mathcal M_R'(t) \le{}& 16E(u_0) - (8-2\varepsilon)\mathcal D_R(u(t))\\
&\quad + C_{N,b,\varepsilon}R^{-4} \left( m+m^{q/2}+m^{1+2p/b}\right).
\end{aligned}
\]
Choosing $\varepsilon>0$ sufficiently small and absorbing it into the defect term, we obtain
\[
\mathcal M_R'(t) \le 16E(u_0) -c_{N,b}\mathcal D_R(u(t)) + C_{N,b}R^{-4}
\left(m+m^{q/2}+m^{1+2p/b}\right),
\]
which proves the proposition.
\end{proof}

\begin{remark}
The point of Proposition~\ref{prop:coercive-virial} is that the defect $\mathcal D_R$ is retained with a strictly negative sign. If this term were discarded, one would recover only a monotonicity estimate for $\mathcal M_R$, which by itself still allows a global grow-up scenario. The remainder of the argument couples the same defect back to the localized virial quantity.
\end{remark}

\section{Control of the virial by the defect}
\label{sec:virial-defect}

We next show that the same defect appearing in the coercive virial estimate also controls the localized virial itself.

\begin{proposition}[Virial--defect estimate]
\label{prop:virial-defect}
Let $N\geq 1$, $0<b<\min\{4,N/2\}$, and $u$ be an $H^2$-solution to \eqref{eq:ibnls} defined on the maximal time interval $(-T_-,T_+)$. There exists $C_N>0$ such that, for every $R>0$ and every $t\in (-T_-,T_+)$,
\[
|\mathcal M_R(t)| \le C_Nm + C_NR m^{3/4}\mathcal D_R(u(t))^{1/4}.
\]
\end{proposition}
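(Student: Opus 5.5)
The plan is to bound $\mathcal M_R$ crudely in absolute value and then estimate it annulus by annulus, reusing the dyadic machinery from the proof of Proposition~\ref{prop:weighted-GN}. Since $\nabla\phi_R(x)=xh(|x|/R)$, we have
\[
|\mathcal M_R(t)|\le 2\int_{\mathbb R^N}|x|\,h(|x|/R)\,|u|\,|\nabla u|\,dx\le 2\sum_{j\in\mathbb Z}\int_{A_j}|x|\,h(|x|/R)\,|u|\,|\nabla u|\,dx ,
\]
where $A_j$, $A_j^{**}$, $\rho_j=2^jR$, $d_j$, $m_j$, $D_j$ are as in the proof of Proposition~\ref{prop:weighted-GN}. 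No cancellation from the imaginary part is needed. The essential point is that the output involves no $\|\Delta u\|_{L^2}$ at all. We therefore must bound $\nabla u$ on every annulus only through the defect, including near the origin, where $\omega_R$ is small but the factor $|x|$ compensates.

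On each annulus, set $h_j:=\sup_{A_j}h(|x|/R)$. By Cauchy--Schwarz,
\[
\int_{A_j}|x|h|u||\nabla u|\,dx\lesssim \rho_j h_j\, m_j^{1/2}\|\nabla u\|_{L^2(A_j)}.
\]
Next I will insert the localized gradient bound \eqref{eq:localized-gradient} together with $\|D^2u\|_{L^2(A_j^{**})}\lesssim d_j^{-1/2}D_j^{1/2}$, exactly as in Lemma~\ref{lem:weighted-gradient}. This gives
\[
\|\nabla u\|_{L^2(A_j)}\lesssim m_j^{1/4}d_j^{-1/4}D_j^{1/4}+\rho_j^{-1}m_j^{1/2}.
\]
Hence the $j$-th contribution is
\[
\lesssim \bigl(\rho_j h_j d_j^{-1/4}\bigr)\, m_j^{3/4}D_j^{1/4}+h_j\,m_j .
\]

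The key step is the uniform coefficient bound $\rho_j h_j d_j^{-1/4}\lesssim R$. I will verify it by splitting into inner and outer annuli, as before.
\begin{itemize}[leftmargin=2em]
\item If $\rho_j\le R$, then $d_j=(\rho_j/R)^4$, so $\rho_j d_j^{-1/4}=R$ and $h_j\le1$. This is precisely where the quartic vanishing of $\omega_R$ at the origin matches the linear vanishing of $|\nabla\phi_R|$.
\item If $\rho_j\ge R$, then $d_j=1$ and $\rho_j h_j\le 2^jR\,e^{-2^{4j}}\lesssim R$, by the rapid decay of the exponential profile.
\end{itemize}
I regard this matching of powers as the only delicate point. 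Once it holds, the first sum is controlled by H\"older's inequality with exponents $4/3$ and $4$ and the finite overlap of the enlarged annuli:
\[
\sum_j m_j^{3/4}D_j^{1/4}\le\Bigl(\sum_j m_j\Bigr)^{3/4}\Bigl(\sum_j D_j\Bigr)^{1/4}\lesssim m^{3/4}\mathcal D_R(u)^{1/4}.
\]
The second sum is bounded using $h_j\le1$:
\[
\sum_j h_j m_j\lesssim \sum_j m_j\lesssim m .
\]
Combining the two gives $|\mathcal M_R(t)|\le C_Nm+C_NRm^{3/4}\mathcal D_R(u(t))^{1/4}$, with constants depending only on $N$ through the annular cutoffs.
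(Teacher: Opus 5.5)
Your proof is correct, but it takes a different route from the paper.

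\textbf{The paper's route.} The paper argues globally. With $a=\nabla\phi_R$ and $X=\|a\cdot\nabla u\|_{L^2}$, it writes $|\mathcal M_R|\le 2m^{1/2}X$ and integrates by parts once in $X^2$. It then uses the radial identity $\partial_j(a_ja_k)=h(s)(N+1-8s^4)a_k$ to bound the first error term by $Cm^{1/2}X$, and the pointwise bound $|a|^4\lesssim R^4\omega_R$ on the term carrying $D^2u$. Finally it solves the quadratic inequality $X^2\le Cm^{1/2}X+CR^2m^{1/2}\mathcal D_R(u)^{1/2}$.

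\textbf{Your route.} You drop the directional structure, bound $|\mathcal M_R|$ by $2\int|a||u||\nabla u|\,dx$, and run the annular machinery behind Lemma~\ref{lem:weighted-gradient}: the localized estimate \eqref{eq:localized-gradient} together with $\omega_R\simeq d_j$ on $A_j^{**}$. Your coefficient condition $\rho_jh_jd_j^{-1/4}\lesssim R$ is exactly the dyadic form of the paper's bound $|a|\lesssim R\,\omega_R^{1/4}$. In your version the exponent $1/4$ comes from taking a square root of the per-annulus gradient interpolation, followed by H\"older with exponents $4/3$ and $4$. In the paper it comes from the quadratic inequality.

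\textbf{Comparison.} The paper's argument is shorter and needs no cutoffs. However, it uses that $a$ is an eigenvector of $D^2\phi_R$: this makes $(a\cdot\nabla)a$ a bounded multiple of $a$, so the lower-order term is absorbed into $X$. Your argument uses only the size bound $|\nabla\phi_R(x)|\lesssim\min\{|x|,R\}\simeq R\,\omega_R(x)^{1/4}$. So it would survive for non-radial multipliers with the same profile, and it actually controls the larger quantity $\int|\nabla\phi_R||u||\nabla u|\,dx$. The price is that it depends on the finite-overlap decomposition already built in Section~\ref{sec:weighted-estimates}.
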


\begin{proof}
Set
\[
a(x):=\nabla\phi_R(x)=xh(|x|/R)
\]
and
\[
X:=\|a\cdot\nabla u\|_{L^2}.
\]
By Cauchy--Schwarz,
\[
|\mathcal M_R(u)| \le 2m^{1/2}X.
\]
Using the summation convention,
\[
X^2 = \operatorname{Re} \int_{\mathbb R^N} a_ja_k(\partial_ju) \overline{\partial_ku} dx.
\]
Integrating by parts in the $j$ variable,
\[
\begin{aligned}
X^2={}& -\operatorname{Re} \int_{\mathbb R^N} u \partial_j(a_ja_k) \overline{\partial_ku} dx\\
&-\operatorname{Re} \int_{\mathbb R^N} u a_ja_k \overline{\partial_{jk}u} dx.
\end{aligned}
\]
A direct computation gives
\[
\partial_j(a_ja_k) = h(s)(N+1-8s^4)a_k.
\]
Since the scalar factor $h(s)(N+1-8s^4)$ is uniformly bounded, the first term is bounded by
\[
Cm^{1/2}X.
\]
For the second term, observe that
\[
|a(x)|^4=|x|^4e^{-4|x|^4/R^4}.
\]
Since
\[
s^4e^{-4s^4} \lesssim 1-e^{-s^4},
\]
we obtain
\[
|a(x)|^4 \lesssim R^4\omega_R(x).
\]
Therefore
\[
\begin{aligned}
\left|\int_{\mathbb R^N} u a_ja_k \overline{\partial_{jk}u} dx\right|
&\le m^{1/2} \left(\int_{\mathbb R^N} |a|^4|D^2u|_F^2 dx \right)^{1/2}\\
&\lesssim R^2m^{1/2}\mathcal D_R(u)^{1/2}.
\end{aligned}
\]
Thus
\[
X^2 \le Cm^{1/2}X + CR^2m^{1/2}\mathcal D_R(u)^{1/2}.
\]
Solving this quadratic inequality gives
\[
X \le Cm^{1/2} + CRm^{1/4}\mathcal D_R(u)^{1/4}.
\]
Hence
\[
|\mathcal M_R(u)| \le C_Nm + C_NR m^{3/4}\mathcal D_R(u)^{1/4}.
\]
\end{proof}

\section{Proof of the main theorem}
\label{sec:blowup}

\begin{proof}[Proof of Theorem~\ref{thm:main}]
Suppose now that
\[
E(u_0)=-e<0.
\]
Choose $R$ sufficiently large, depending only on $e$, $m$, $N$, and $b$, so that
\[
C_{N,b}R^{-4}F_{b,p}(m)\le8e.
\]
Then Proposition~\ref{prop:coercive-virial} yields
\[
\mathcal M_R'(t) \le -8e-c_{N,b}\mathcal D_R(u(t)).
\]
We first prove finite-time blow-up forward in time. Suppose, toward a contradiction, that
\[
T_+=\infty.
\]
Integrating the constant negative term gives
\[
\mathcal M_R(t) \le \mathcal M_R(0)-8et.
\]
Hence
\[
\mathcal M_R(t)\longrightarrow-\infty \quad \text{as }t\to\infty.
\]
Set
\[
y(t):=-\mathcal M_R(t).
\]
There exists $t_0<\infty$ such that
\[
y(t)\ge2C_Nm>0 \quad \text{for all }t\ge t_0,
\]
where $C_N$ is at least as large as the first constant in Proposition~\ref{prop:virial-defect}.

For $t\ge t_0$, Proposition~\ref{prop:virial-defect} gives
\[
\frac12y(t) \le C_NR m^{3/4} \mathcal D_R(u(t))^{1/4}.
\]
Consequently,
\[
\mathcal D_R(u(t)) \ge c_NR^{-4}m^{-3}y(t)^4, \quad t\ge t_0.
\]
On the other hand,
\[
y'(t)=-\mathcal M_R'(t)\ge c_{N,b}\mathcal D_R(u(t)).
\]
Therefore
\[
y'(t)\ge c_{N,b}R^{-4}m^{-3}y(t)^4,
\quad
t\ge t_0.
\]
Since $y(t)>0$,
\[
\frac{d}{dt}y(t)^{-3}=-3y(t)^{-4}y'(t)
\le-c_{N,b}R^{-4}m^{-3}.
\]
Integrating from $t_0$ to $t$ yields
\[
y(t)^{-3}\le y(t_0)^{-3}-c_{N,b}R^{-4}m^{-3}(t-t_0).
\]
Set 
\[
t_* := t_0 + \frac{y(t_0)^{-3}}{c_{N,b} R^{-4} m^{-3}}>t_0.
\]
Since $T_+=\infty$ and $u \in C([0,t_*];H^2(\R^N))$, $y(t_*)$ is finite and positive. However, the preceding estimate gives $y(t_*)^{-3} \leq 0$, a contradiction. Therefore, $T_+<\infty$. The blow-up alternative then gives
\[
\lim_{t\uparrow T_+} \|u(t)\|_{H^2} = \infty.
\]
For the backward statement, define
\[
\widetilde u(t,x):=\overline{u(-t,x)}.
\]
Then $\widetilde u$ solves the same equation and satisfies
\[
M(\widetilde u)=M(u), \quad E(\widetilde u)=E(u)<0.
\]
Applying the forward argument to $\widetilde u$ gives $T_-<\infty$ and
\[
\lim_{t\downarrow-T_-} \|u(t)\|_{H^2} = \infty.
\]
This completes the proof.
\end{proof}

\section{Strong instability of standing waves}
\label{sec:instability}

In this section, we prove Corollary~\ref{coro:instability}. Let $\omega\in\mathbb R$, and let $0\neq Q\in H^2(\mathbb R^N)$ be a standing-wave profile, that is, a solution of
\[
\Delta^2Q+\omega Q-|x|^{-b}|Q|^pQ=0.
\]
Then
\[
u(t,x)=e^{i\omega t}Q(x)
\]
is a standing-wave solution of \eqref{eq:ibnls}.

We first record the Pohozaev identities associated with the stationary equation.

\begin{lemma}[Pohozaev identities]
\label{lem:pohozaev-standing}
Let $N\ge 1$, $0<b<\min\{4,N/2\}$, and $p,q$ be as in \eqref{eq:def-pq}. Let $0\neq Q\in H^2(\mathbb R^N)$ solve
\[
\Delta^2Q+\omega Q-|x|^{-b}|Q|^pQ=0.
\]
Then
\[
\|\Delta Q\|_{L^2}^2+\omega\|Q\|_{L^2}^2-\int_{\mathbb R^N}|x|^{-b}|Q|^q dx=0
\]
and
\[
\frac{N-4}{2}\|\Delta Q\|_{L^2}^2+\frac N2\omega\|Q\|_{L^2}^2
-\frac{N-b}{q}\int_{\mathbb R^N}|x|^{-b}|Q|^q dx =0.
\]
In particular,
\[
E(Q)=0 \quad\text{and}\quad \omega>0.
\]
\end{lemma}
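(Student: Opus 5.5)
The plan is to derive the two identities by testing the stationary equation against two multipliers and then to combine them algebraically. The first identity follows by pairing the equation with $\overline Q$ and integrating; since $Q\in H^2$, the pairing $\langle \Delta^2 Q,Q\rangle=\|\Delta Q\|_{L^2}^2$ is legitimate in the $H^{-2}$--$H^2$ duality. The nonlinear term $\int|x|^{-b}|Q|^q\,dx$ is finite by the weighted Gagliardo--Nirenberg inequality for $H^2$ functions, as in Step 2 of Proposition~\ref{prop:weighted-GN}.

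The second identity is the Pohozaev identity. We obtain it by pairing the equation with $x\cdot\nabla\overline Q+\frac N2\overline Q$, the generator of $L^2$-scaling, and taking real parts. Formally we use three facts:
\[
\operatorname{Re}\langle \Delta^2Q,\,x\cdot\nabla Q\rangle=\tfrac{4-N}{2}\|\Delta Q\|_{L^2}^2,
\qquad
\operatorname{Re}\langle Q,\,x\cdot\nabla Q\rangle=-\tfrac N2\|Q\|_{L^2}^2,
\]
and
\[
\operatorname{Re}\int|x|^{-b}|Q|^pQ\,x\cdot\nabla\overline Q\,dx=-\tfrac{N-b}{q}\int|x|^{-b}|Q|^q\,dx .
\]
The last of these comes from $\nabla\cdot(x|x|^{-b})=(N-b)|x|^{-b}$. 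Equivalently, we can differentiate the action $S_\omega(Q_\lambda)$ along $Q_\lambda(x)=Q(\lambda x)$ at $\lambda=1$. The homogeneities are $\lambda^{4-N}$, $\lambda^{-N}$ and $\lambda^{b-N}$, which give exactly
\[
\tfrac{N-4}{2}\|\Delta Q\|^2+\tfrac N2\omega\|Q\|^2-\tfrac{N-b}{q}\int|x|^{-b}|Q|^q=0 .
\]

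The main obstacle is justifying this for a mere $H^2$ solution, because $x\cdot\nabla Q$ need not belong to $H^2$ and $|x|^{-b}$ is singular at the origin. We would handle it by truncation. Replace $x$ by $x\chi(x/n)$ with $\chi$ a cutoff, and mollify $Q$ or use elliptic regularity: the equation gives $\Delta^2Q\in L^2+$ weighted $L^{q'}$, hence extra local regularity away from $0$. Then we pass to the limit $n\to\infty$ and cut off near the origin. The commutator terms carry factors of $\nabla^k\chi(x/n)$ times $x$, which are bounded and supported where $|x|\sim n$, so they vanish as $n\to\infty$ by dominated convergence. Near the origin, the relevant integrand $|x|^{1-b}|Q|^{q-1}|\nabla Q|$ is integrable since $b<N/2$. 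Alternatively, we would derive the Pohozaev identity by applying the localized virial identity of Section~\ref{sec:virial} to the standing wave $u=e^{i\omega t}Q$. Since $\mathcal M_\phi(u(t))$ is then constant in time, its derivative vanishes for every $\phi$. Choosing $\phi=\frac12|x|^2\chi_n$ and letting $n\to\infty$ yields $8\|\Delta Q\|^2-\frac{16}{q}\int|x|^{-b}|Q|^q=0$, which is exactly $E(Q)=0$. This route reuses already-justified machinery and is likely the cleanest.

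The consequences then follow from linear algebra on the two identities. Multiply the first identity by $N/2$ and subtract the second to get
\[
2\|\Delta Q\|^2=\Big(\tfrac N2-\tfrac{N-b}{q}\Big)\int|x|^{-b}|Q|^q .
\]
Using $q=\frac{2N+8-2b}{N}$, a direct computation gives $\frac N2-\frac{N-b}{q}=\frac4q$, so $\|\Delta Q\|^2=\frac2q\int|x|^{-b}|Q|^q$, that is, $E(Q)=0$. Substituting this back into the first identity gives
\[
\omega\|Q\|^2=\Big(1-\tfrac2q\Big)\int|x|^{-b}|Q|^q=\tfrac{q-2}{2}\|\Delta Q\|_{L^2}^2 .
\]
Since $Q\neq0$ lies in $H^2$, we have $\|\Delta Q\|_{L^2}>0$, because a harmonic $L^2$ function vanishes. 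As $q>2$, it follows that $\omega>0$.
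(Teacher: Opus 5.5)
Your proposal is correct and follows the paper's proof. You pair the equation with $\overline Q$ and with $x\cdot\nabla\overline Q$, justified by a cutoff argument, and then use $\frac N2-\frac{N-b}{q}=\frac4q$ to get $E(Q)=0$ and $\omega\|Q\|_{L^2}^2=\frac{q-2}{2}\|\Delta Q\|_{L^2}^2>0$.

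Your alternative route, applying the localized virial identity to $e^{i\omega t}Q$ with $\phi=\frac12|x|^2\chi(x/n)$, is also sound. It gives $E(Q)=0$ directly, and the second Pohozaev identity follows by combining this with the first. Its advantage is that the regularity issue falls on the evolution virial identity, which the paper already takes for granted, rather than on elliptic regularity of $Q$.
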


\begin{proof}
Multiplying the stationary equation by $\overline Q$, integrating over
$\mathbb R^N$, and taking real parts gives
\[
\|\Delta Q\|_{L^2}^2 +\omega\|Q\|_{L^2}^2 -\int_{\mathbb R^N}|x|^{-b}|Q|^q dx =0.
\]
Similarly, multiplying by $x\cdot\nabla\overline Q$, taking real parts, and integrating by parts yields
\[
\frac{N-4}{2}\|\Delta Q\|_{L^2}^2 +\frac N2\omega\|Q\|_{L^2}^2 -\frac{N-b}{q} \int_{\mathbb R^N}|x|^{-b}|Q|^q dx=0.
\]
The latter identity is justified for $H^2$ solutions by the standard cutoff argument, see e.g. \cite[Lemma~2.1]{BCGJ} for the corresponding biharmonic Pohozaev computation.

Set
\[
A:=\|\Delta Q\|_{L^2}^2,
\quad
B:=\|Q\|_{L^2}^2,
\quad
C:=\int_{\mathbb R^N}|x|^{-b}|Q|^q dx.
\]
Subtracting $\frac N2$ times the first identity from the second gives
\[
-2A+\left(\frac N2-\frac{N-b}{q}\right)C=0.
\]
Since
\[
q=\frac{2N+8-2b}{N},
\]
we have
\[
\frac N2-\frac{N-b}{q}=\frac4q.
\]
Therefore
\[
\frac12A=\frac1qC.
\]
This is precisely
\[
E(Q)=\frac12\|\Delta Q\|_{L^2}^2-\frac1q \int_{\mathbb R^N}|x|^{-b}|Q|^q dx=0.
\]
Returning to the first Pohozaev identity and using $C=qA/2$, we obtain
\[
\omega B=\left(\frac q2-1\right)A=\frac p2A.
\]
Equivalently,
\[
\omega\|Q\|_{L^2}^2=\frac p2\|\Delta Q\|_{L^2}^2=\frac{4-b}{N}\|\Delta Q\|_{L^2}^2.
\]
Since $Q\neq0$ and $0<b<4$, the right-hand side is strictly positive. Hence
\[
\omega>0.
\]
\end{proof}

We are now in a position to prove the strong instability result.

\begin{proof}[Proof of Corollary~\ref{coro:instability}]
Let $\varepsilon>0$. For $c>1$, define
\[
Q_c:=cQ.
\]
By Lemma~\ref{lem:pohozaev-standing},
\[
E(Q)=0,
\]
and hence
\[
\frac12\|\Delta Q\|_{L^2}^2=\frac1q \int_{\mathbb R^N}|x|^{-b}|Q|^q dx.
\]
It follows that
\[
\begin{aligned}
E(Q_c)&=\frac{c^2}{2}\|\Delta Q\|_{L^2}^2-\frac{c^q}{q}\int_{\mathbb R^N}|x|^{-b}|Q|^q dx\\
&=\frac12\|\Delta Q\|_{L^2}^2\left(c^2-c^q\right).
\end{aligned}
\]
Since $p>0$, we have $q=p+2>2$. Therefore, for every $c>1$,
\[
c^q>c^2,
\]
and consequently
\[
E(Q_c)<0.
\]

On the other hand,
\[
\|Q_c-Q\|_{H^2}=(c-1)\|Q\|_{H^2}\longrightarrow0
\quad
\text{as }c\downarrow1.
\]
Thus we may choose $c>1$ sufficiently close to $1$ so that
\[
\|Q_c-Q\|_{H^2}<\varepsilon.
\]
Applying Theorem~\ref{thm:main} with initial datum $u_0=Q_c$, we conclude that the corresponding solution blows up in finite positive time. Hence the standing wave $e^{i\omega t}Q$ is strongly unstable by finite-time blow-up.
\end{proof}

\begin{remark}
The argument uses only the fact that $Q$ is a nontrivial solution of the stationary equation. In particular, no ground-state characterization, minimizing property, positivity, or radial symmetry is required. Thus Corollary~\ref{coro:instability} applies to every nontrivial standing wave.
\end{remark}

\end{document}